\newtheorem{teo}{Theorem}[section]
\newtheorem{defin}[teo]{Definition}
\newtheorem{prop}[teo]{Proposition}
\theoremstyle{definition}
\newtheorem{oss}[teo]{Remark}
\numberwithin{equation}{section}
\title{ \textbf {Symplectic rigidity and weak commutativity}}
\author{ \sc Simone Vazzoler \qquad Franco Cardin \\ { \sc 
}
\\ Dipartimento di Matematica Pura ed Applicata\\ Via Trieste, 63 - 35121 Padova, Italy }
\begin{document}

\maketitle


\begin{abstract}
An alternative proof of Eliashberg-Gromov's $C^0$-rigidity theorem is presented and a new notion of weak Lie brackets for Hamiltonian vector fields is proposed and compared.
\end{abstract}

A notion of Hamiltonian $C^0$-commutativity, together with a related theorem linking it to the standard Poisson brackets, has been introduced in \cite{CaVi08} in connection with the problem of finding variational solutions of multi-time Hamilton-Jacobi equations. The main theorem on this new environment is based essentially on the use of Viterbo's capacities. In that paper it has been foreseen that from that $C^0$-commutativity framework Eliashberg-Gromov's theorem on symplectic rigidity could follow. An interesting proof of this fact has been recently worked out by V. Humili\`ere (see \cite{Hu09}) using the concept of pseudo-representations. 
 \\
  The question on the $C^0$-closure of the group of symplectomorphisms is widely considered as a starting point for the study of symplectic topology, thus is important to enrich this particular area with new proofs, eventually trying to give further elucidations to the subject. In this note, moving from the above $C^0$-commutativity, an alternative proof of Eliashberg-Gromov's theorem is presented which is based on simple algebraic arguments. 
  \\
  A general notion of weak commutativity of vector fields (not necessarily Hamiltonian) is presented in the work by Rampazzo and Sussmann (see \cite{RaSu01}), where they extend the usual Lie brackets even in the case of Lipschitz vector fields. In \cite{CaVi08} has been suggested the existence of a possible relation between $C^0$-commutativity and the notion presented in \cite{RaSu01}. Here, we enter adding some details on this matter. Firstly, on the line of thought of  \cite{RaSu01}, we introduce the notion of weak Hamiltonian vector field and afterwards the notion of weak Lie brackets; this construction makes our definition coinciding with the one given in \cite{RaSu01}. Lastly, we see that this setting is right suitable to provide the equivalence between $C^0$-commutativity and weak commutativity of vector fields for Hamiltonians of class $C^{1,1}$.

\section{Generating Functions Quadratic at Infinity (GFQI)}

Let $M$ be a paracompact $n$ dimensional manifold and $S\in C^2(M;\mathbb{R})$. In what follows we suppose that the Palais-Smale condition holds:
\begin{itemize}
\item[(PS)]  The pair $(M,S)$ satisfies the (PS) condition if every sequence $\{x_k\}_{k\in\mathbb{N}}$ such that $\displaystyle\lim_{n\rightarrow\infty}\|D S(x_k)\|=0$ and $S(x_k)$ is bounded, admits a convergent subsequence.
\end{itemize}
We notice that paracompactness allows us to choose a Riemannian metric $g$ on $M$ such that, for example,  $\|D S(x)\|=\sqrt{\langle g(x)D S(x),D S(x)\rangle}$.
We will denote $S^\lambda$ the sublevel set relative to $\lambda$, i.e. $S^\lambda=\{q\in M\,|\,S(q)\leq \lambda\}$, and for every $\alpha\in H^*(S^b,S^a)\setminus \{0\}$ we define 
\[
c(\alpha,S)=\inf\{\lambda\in [a,b]\,:\,\,i^*_\lambda\alpha\neq 0\}
\]
where $i^*_\lambda:H^*(S^b,S^a)\rightarrow H^*(S^\lambda,S^a)$ is the map induced by the natural inclusion $i_\lambda:S^\lambda\hookrightarrow S^b$ (if $M$ is not compact we will work with compactly supported differential forms).
We recall some important properties of $c(\alpha,S)$:
\begin{itemize}
\item[(i)] $c(\alpha,S)$ is a critical value of $S$: it is commonly said the ``min-max'' critical value;
\item[(ii)] $c(u\cdot v,S_1+S_2)\geq c(u,S_1)+c(v,S_2)$;
\item[(iii)] if $\alpha\in H_{n-q}(M)$ is the Poincar\'e dual of $\mu\in H^q(M)$, then $c(\mu,-S)=-c(\alpha,S)$; in particular:
\item[(iv)] if $1\in H^0(M)$ and $\mu\in H^n(M)$, then $c(1,-S)=-c(\mu,S)$.
\end{itemize}
For the proofs see \cite{Vi92} or \cite{Vi06}. Now we consider a closed differentiable manifold $M$ and denote with $L$ a Lagrangian submanifold of $T^*M$ isotopic to the zero section $O_M$ by means of a compactly supported time one Hamiltonian flow $\varphi^1$.
\begin{defin}
A smooth function $S:M\times\mathbb{R}^k\rightarrow \mathbb{R}$ is a generating function quadratic at infinity (GFQI) for a Lagrangian submanifold $L$ if:
\begin{itemize}
\item[(i)] the map
\[
(q;\xi)\rightarrow \frac{\partial S}{\partial \xi}(q;\xi)
\]
has zero as a regular value;
\item[(ii)] the map
\begin{align*}
i_s:\Sigma_s\subset M\times \mathbb{R}^k&\rightarrow T^*M\\
(q;\xi)&\mapsto(q;\frac{\partial S}{\partial q}(q;\xi))
\end{align*}
has image $i_s(\Sigma_s)=L$, where $\Sigma_s=\{(q;\xi):\frac{\partial S}{\partial\xi}(q;\xi)=0\}$.
\item[(iii)] for $|\xi|>C$ 
\[
S(q;\xi)=\xi^T Q\xi
\]
where $\xi^T Q\xi$ it is a non degenerate quadratic form;
\end{itemize}  
\end{defin}
It is well known (see again \cite{Vi92}) that a generating function is unique up to three fundamental operations: diffeomorphisms on the fibers and addition of quadratic forms and constants.
In more details, let $S_1,S_2$ be two GFQI. We easy see that $S_1$ and $S_2$ are \textit{equivalent}, i.e. they draw the same $L$, if there exists a diffeomorphism
\begin{align*}
\Phi:M\times\mathbb{R}^k&\rightarrow M\times\mathbb{R}^k\\
(q;\xi)&\mapsto(q;\phi(q;\xi))
\end{align*}
such that $S_1(q;\phi(q;\xi))=S_2+c$ with $c\in\mathbb{R}$. Moreover, we see also that a GFQI $S_2$  is still equivalent to $S_1$  if
\[
S_2(q;\xi,\eta)=S_1(q;\xi)+\eta^T B\eta
\]
where $\eta^T B\eta$ it is a non degenerate quadratic form on the fibers; $S_2$ is said \textit{stabilization} of $S_1$. The following theorems ensure the existence and unicity of a GFQI for $L=\varphi^1(O_M)$, precisely up to the three operations above.
\begin{teo}
Let $O_{M}$ be the zero section of $T^*M$ and $(\varphi^t)$ a Hamiltonian flow. Then the Lagrangian submanifold $\varphi^1(O_{M})$ admits a GFQI.
\end{teo}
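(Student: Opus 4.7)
The strategy I would follow is Sikorav's: reduce the problem to the case of symplectomorphisms that are $C^1$-close to the identity, and then compose generating functions along a fine subdivision of the time interval. The key ingredient is that a symplectomorphism of $T^*M$ sufficiently $C^1$-close to the identity has graph which, inside $(T^*M\times T^*M,-\omega\oplus\omega)$, is a Lagrangian section of a natural projection, and therefore admits a globally defined generating function of mixed type.

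First, I would exploit the compact support of the flow to choose a subdivision $0=t_0<t_1<\dots<t_N=1$ fine enough that each piece $\psi_i:=\varphi^{t_i}\circ(\varphi^{t_{i-1}})^{-1}$ lies in a $C^1$-neighbourhood of the identity where a mixed-type generating function $W_i:M\times M\to\mathbb{R}$, in the variables $(q_{i-1},p_i)$, is defined. By compact support, each $W_i$ agrees with the tautological pairing $\langle p_i,q_{i-1}\rangle$ outside a large compact set of $M\times M$.

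Second, I would assemble the candidate GFQI by the standard composition formula. Taking as fibre variables $\xi=(q_0,p_1,q_1,p_2,\dots,q_{N-1},p_N)$ and setting $q_N=q$, define
\[
S(q;\xi)=\sum_{i=1}^{N}\bigl(W_i(q_{i-1},p_i)-\langle p_i,q_i\rangle\bigr)+\langle p_N,q\rangle,
\]
the boundary condition $p_0\equiv 0$ being automatically encoded because we start from the zero section. The equation $\partial_\xi S=0$ forces $(q_i,p_i)_{i=0,\dots,N}$ to be a broken trajectory of $\varphi^t$ with $(q_0,p_0)\in O_M$, so $i_S(\Sigma_S)=\varphi^1(O_M)=L$, proving (ii). Regularity of $0$ as a value of $\partial_\xi S$, hence condition (i), follows from the invertibility of the relevant mixed Hessians for small time steps, which is a direct consequence of the implicit function theorem applied near the identity.

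The main obstacle is condition (iii), the non-degenerate quadratic behaviour at infinity. Outside a compact set of $M$ each $\psi_i$ is the identity and $W_i(q_{i-1},p_i)=\langle p_i,q_{i-1}\rangle$, so $S$ reduces to $\sum_{i=1}^N\langle p_i,q_{i-1}-q_i\rangle+\langle p_N,q\rangle$ in the fibre variables. I would then carry out a linear change of fibre coordinates that decouples $q$ and exhibits the remaining quadratic form in $\xi$ as a direct sum of hyperbolic planes, hence non-degenerate. The delicate point is ensuring that the transition between the compactly supported region and the quadratic regime is smooth and does not spoil conditions (i)--(ii); I would handle it via a cutoff argument and, where needed, by further stabilising the generating function with an auxiliary quadratic form on extra fibres to adjust the signature and obtain a bona fide GFQI, concluding the proof.
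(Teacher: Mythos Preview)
Your outline is precisely Sikorav's broken-geodesic construction, which is exactly what the paper invokes: its entire proof is the citation \cite{Si86}. Apart from minor slips (for a closed $M$ the phrase ``outside a compact set of $M$'' should read ``for large $|\xi|$ in the fibre'', and the pairing $\langle p_i,q_i\rangle$ must be interpreted via an embedding or local trivialisation), the plan is correct and matches the paper's intended route.
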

\begin{proof}
\cite{Si86}
\end{proof}
\begin{teo}
Let $S_1$ and $S_2$ be two GFQI for $L=\varphi^1(O_M)$. Then, up to the above three operations, $S_1$ and $S_2$ are equivalent.
\end{teo}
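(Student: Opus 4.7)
The plan is to reduce to a parametric uniqueness statement along a Hamiltonian isotopy, starting from the canonical case of the zero section. First, using the stabilization operation, I would arrange that $S_1$ and $S_2$ have the same fiber dimension $k$ and that the quadratic parts at infinity have the same signature: any non-degenerate quadratic form can be stabilized to one of prescribed larger signature by adding a form of the required index, so after adding auxiliary quadratic forms to both we are free to assume $Q_1,Q_2$ are linearly equivalent on $\mathbb{R}^k$, and by an obvious fiber diffeomorphism (operation (a)) we may further identify them outside a compact set.

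Next, exploiting the hypothesis $L=\varphi^{1}(O_M)$, I would introduce the full Hamiltonian isotopy $L_t=\varphi^{t}(O_M)$, $t\in[0,1]$, which connects $O_M$ to $L$. By the existence theorem (Sikorav) applied parametrically, each $L_t$ admits a GFQI $S_t$ depending smoothly on $t$, and in fact Chaperon's broken-geodesic/composition procedure produces $S_t$ explicitly from the generating functions of small pieces of the flow. The key observation is then that both $S_1$ and $S_2$ sit at the endpoint of such a smooth path $S_t^{(1)}$, $S_t^{(2)}$ whose initial value generates the zero section. Therefore it suffices to prove two claims: (I) uniqueness at $t=0$, i.e.\ any two GFQI for $O_M$ are equivalent up to the three operations; and (II) parametric rigidity, i.e.\ if $S_t^{(1)}$ and $S_t^{(2)}$ are two smooth families of GFQI generating the same family of Lagrangians $L_t$ and equivalent at $t=0$, then they remain equivalent for all $t$.

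Claim (I) is handled as follows. If $S(q;\xi)$ generates $O_M$, then $\partial S/\partial q=0$ on $\Sigma_s$, so along each fiber $\Sigma_s\cap(\{q\}\times\mathbb{R}^k)$ the function $S$ attains a non-degenerate critical point at a single $\xi(q)$ (non-degeneracy comes from the quadratic behaviour at infinity together with transversality of the zero-section condition). A parametric Morse lemma, combined with the matching to $Q$ at infinity, yields a fiber diffeomorphism bringing $S(q;\xi)$ to $Q(\xi)+c(q)$; since $Q(\xi)+c(q)$ generates $O_M$ iff $c$ is constant, we conclude. Claim (II) is the parametric Moser argument: writing the infinitesimal difference $R_t=\dot S_t^{(2)}-\dot S_t^{(1)}$ (after identifying fibers by the equivalence already built), one shows that $R_t$ vanishes on $\Sigma_s$ up to a function of $q$ alone, and hence can be integrated by an ODE in the space of fiber diffeomorphisms (plus a time-dependent constant) producing the required equivalence. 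This last step is where I expect the real work to lie: one must verify that the vector field on $M\times\mathbb{R}^k$ solving the homological equation $dS_t\cdot X_t = R_t - c'(t)$ is complete (here the quadratic behaviour at infinity is essential, as it forces $X_t$ to be bounded outside a compact set) and tangent to the fibers, so that its flow integrates to a global fiber diffeomorphism realizing the equivalence between $S_1$ and $S_2$.
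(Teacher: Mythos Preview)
The paper does not actually prove this statement: its entire proof is the citation \cite[p.~688]{Vi92}. Your sketch is, in outline, precisely the strategy of that reference (later streamlined by Th\'eret): normalize the quadratic part at infinity by stabilization and a fiber change, establish uniqueness for the zero section, and then propagate equivalence along the Hamiltonian isotopy $L_t=\varphi^t(O_M)$ by a Moser--type deformation argument. So you are not taking a different route; you are reconstructing the cited one.

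That said, there is a genuine gap in your Claim~(I). You assert that a GFQI $S$ for $O_M$ has, on each fiber $\{q\}\times\mathbb{R}^k$, a \emph{single} non-degenerate critical point $\xi(q)$, and you attribute this to ``quadratic behaviour at infinity together with transversality''. Neither conclusion follows. Transversality (condition (i)) says only that the full Jacobian $\bigl(\partial^2_{\xi\xi}S,\ \partial^2_{q\xi}S\bigr)$ has rank $k$ along $\Sigma_s$; it does \emph{not} force the fiberwise Hessian $\partial^2_{\xi\xi}S$ to be non-degenerate, nor does it prevent several critical points on a fiber, all mapping to the same point of $O_M$. The argument in \cite{Vi92} (and more transparently in Th\'eret's version) does not proceed by a direct parametric Morse lemma; rather one uses that a function equal to a non-degenerate quadratic form outside a compact set can, after a $C^\infty$-small perturbation and a sequence of birth--death cancellations (Cerf theory, or the elementary lemma that such a function is fiber-homotopy equivalent to its quadratic part), be reduced to a function with a single critical point per fiber. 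Only then does the fiberwise Morse lemma apply. Your Claim~(II) is closer to correct, but note that the homological equation is solved via Hadamard's lemma: one writes $R_t$ on each fiber as $\sum_i (\partial S_t/\partial\xi_i)\,Y_i$ because $R_t$ vanishes on $\Sigma_s$ up to a constant, and the resulting vertical vector field is compactly supported (not merely bounded) thanks to the exact equality $S=\xi^TQ\xi$ for $|\xi|>C$, which is what guarantees completeness of the flow.
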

\begin{proof}
\cite[page 688]{Vi92}
\end{proof}
\begin{oss}
In literature sometimes local generating functions $S(q,\xi)$ satisfying $(i)$ and $(ii)$, but not $(iii)$, are called Morse families.
It was known, before \cite{Vi92}, that the above three operations are locally characterizing any Morse family, see \cite {We79} and \cite{LiMa87}.
\end{oss}
If $S$ is a GFQI and $c\in\mathbb{R}$ is large enough, one has
\[
H^*(S^c,S^{-c})\simeq H^*(M)\otimes H^*(D^-,\partial D^-)
\]
where $D^-$ is the unitary disc in the negative eigenspace of $Q$. So, after choosing $\alpha\in H^*(M)\setminus\{0\}$, we can associate to it $\alpha\otimes T\in H^*(S^c,S^{-c})$, where $T$ is a generator of $H^*(D^-,\partial D^{-})\simeq\mathbb{R}$.

\section{The $\gamma$ and $\widehat{\gamma}$ metrics and $C^0$-commuting Hamiltonians}

All the proofs in this section can be found in \cite{CaVi08}.
\begin{defin}
Let $S$ be a GFQI for $L=\varphi^1(O_M)$. We define 
\begin{align*}
\gamma(L)=c(\mu,S)-c(1,S),\, 1\in H^0(M),\,\mu\in H^n(M)
\end{align*}
\end{defin}
It is important to remark that this definition is well posed: even if $c(\mu,S)$ and $c(1,S)$ depends on $S$, the difference does not depend on the function. On the set of the Lagrangian submanifolds of $T^*M$ isotopic to the zero section, named $\mathcal{L}$, we can define a metric.
\begin{defin}
Given $L_1,L_2\in\mathcal{L}$ we define
\[
\gamma(L_1,L_2)=c(\mu,S_1\ominus S_2)-c(1,S_1\ominus S_2)
\] 
where $(S_1\ominus S_2)(q;\xi_1,\xi_2)=S_1(q;\xi_1)-S_2(q;\xi_2)$.
\end{defin}
In \cite{Vi92} it is shown that $\gamma$ is a metric on the set $\mathcal{L}$. If we define $\mathcal{H}_c(T^*M)=C^{\infty}_c([0,1]\times T^*M,\mathbb{R})$ (i.e. the set of the time dependent Hamiltonians with compact support) and set $\mathcal{HD}_c(T^*M)$ to be the group of the time one maps of $\mathcal{H}_c(T^*M)$, we can extend the $\gamma$ metric to $\mathcal{HD}_c(T^*M)$ in this way
\begin{gather*}
\widehat{\gamma}(\varphi)=\sup\{\gamma(\varphi(L),L): L\in\mathcal{L}\}\\
\widehat{\gamma}(\varphi,\psi)=\widehat{\gamma}(\varphi\psi^{-1})
\end{gather*}
\begin{prop}
$\widehat{\gamma}$ defines a bi-invariant metric on $\mathcal{HD}_c(T^*M)$.
\end{prop}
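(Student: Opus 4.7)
The plan is to reduce everything to the three properties of the $\gamma$ pseudo-metric on $\mathcal{L}$: positivity with vanishing exactly on the diagonal, symmetry, and the triangle inequality. Once those are in hand, the statement about $\widehat{\gamma}$ follows formally from the supremum definition together with a substitution trick in the sup.

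First I would check the metric axioms for $\gamma$ on $\mathcal{L}$. Non-negativity $\gamma(L_1,L_2)\ge 0$ follows from the general fact $c(\mu,S)\ge c(1,S)$ (using property (ii) with $u=\mu$, $v=1$ and a constant GFQI). Symmetry $\gamma(L_1,L_2)=\gamma(L_2,L_1)$ comes from property (iv) applied to the GFQI $S_1\ominus S_2$, noting that $-(S_1\ominus S_2)$ is a GFQI for the swapped pair up to a fiber diffeomorphism, so by the uniqueness theorem the capacities agree and a sign swap produces the identity. The triangle inequality is obtained from property (ii) applied to the GFQI $S_1\ominus S_2\oplus S_2\ominus S_3$ (a stabilization of $S_1\ominus S_3$). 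Well-posedness at each step relies on the uniqueness of GFQI up to the three operations. Non-degeneracy $\gamma(L_1,L_2)=0\Rightarrow L_1=L_2$ is the deepest ingredient and I would simply quote it from \cite{Vi92}; this is the main obstacle if one had to do everything from scratch, since it depends on the displacement/intersection results for Lagrangians in $\mathcal{L}$.

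Next I would transfer these properties to $\widehat{\gamma}$ on $\mathcal{HD}_c(T^*M)$. Symmetry $\widehat{\gamma}(\varphi,\psi)=\widehat{\gamma}(\psi,\varphi)$ requires $\widehat{\gamma}(\varphi)=\widehat{\gamma}(\varphi^{-1})$: expanding the sup and substituting $L'=\varphi^{-1}(L)$ gives $\sup_{L'}\gamma(L',\varphi(L'))$, which matches by symmetry of $\gamma$. The triangle inequality $\widehat{\gamma}(\varphi,\chi)\le \widehat{\gamma}(\varphi,\psi)+\widehat{\gamma}(\psi,\chi)$ reduces, after writing $\varphi\chi^{-1}=(\varphi\psi^{-1})(\psi\chi^{-1})$, to $\widehat{\gamma}(\varphi\eta)\le\widehat{\gamma}(\varphi)+\widehat{\gamma}(\eta)$, which follows by adding and subtracting $\eta(L)$ inside $\gamma$ and using the triangle inequality for $\gamma$. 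Non-degeneracy on the group also follows from non-degeneracy on $\mathcal{L}$: if $\widehat{\gamma}(\varphi)=0$ then $\varphi$ fixes every $L\in\mathcal{L}$, and since the zero section alone admits arbitrary Hamiltonian perturbations this forces $\varphi=\mathrm{id}$.

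Finally, for bi-invariance I would treat the two sides separately. Right-invariance is immediate from the defining identity $\widehat{\gamma}(\varphi\alpha,\psi\alpha)=\widehat{\gamma}(\varphi\alpha\alpha^{-1}\psi^{-1})=\widehat{\gamma}(\varphi\psi^{-1})$. Left-invariance amounts to showing $\widehat{\gamma}(\alpha\varphi\alpha^{-1})=\widehat{\gamma}(\varphi)$: rewriting
\[
\widehat{\gamma}(\alpha\varphi\alpha^{-1})=\sup_{L\in\mathcal{L}}\gamma\bigl(\alpha\varphi\alpha^{-1}(L),L\bigr)
\]
and setting $L=\alpha(L')$, which is a bijection of $\mathcal{L}$, one is left with $\sup_{L'}\gamma(\alpha\varphi(L'),\alpha(L'))$. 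It then suffices to know that $\gamma$ itself is invariant under the diagonal action of $\mathcal{HD}_c(T^*M)$ on pairs of Lagrangians, i.e.\ $\gamma(\alpha L_1,\alpha L_2)=\gamma(L_1,L_2)$. This is where the existence/uniqueness theorems for GFQI enter once more: if $S_i$ generates $L_i$, then a GFQI for $\alpha(L_i)$ can be chosen so that $S_1'\ominus S_2'$ differs from $S_1\ominus S_2$ only by the three admissible operations, leaving $c(\mu,\cdot)-c(1,\cdot)$ unchanged. This last symplectic invariance of $\gamma$, rather than any of the algebraic manipulations, is the real content and the step I expect to require the most care.
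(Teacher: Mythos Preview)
The paper does not actually prove this proposition: at the start of the section it states that ``All the proofs in this section can be found in \cite{CaVi08}'', and the proposition is simply recorded without argument. Your sketch is precisely the standard route taken in \cite{CaVi08} and \cite{Vi92}: deduce the metric axioms for $\gamma$ from the min--max properties (i)--(iv) and GFQI uniqueness, then lift them to $\widehat{\gamma}$ via the supremum, with bi-invariance reducing to the Hamiltonian invariance $\gamma(\alpha L_1,\alpha L_2)=\gamma(L_1,L_2)$. So your approach matches the intended one; the only places where you would need to add real detail are the non-degeneracy step ($\varphi(L)=L$ for all $L\in\mathcal{L}$ $\Rightarrow$ $\varphi=\mathrm{id}$, which requires an argument beyond ``arbitrary perturbations of the zero section'') and the symplectic invariance of $\gamma$, both of which are handled in \cite{Vi92}.
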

We will say that $\varphi_n$ c-converges to $\varphi$ (and we will write $\varphi_n\overset{c}{\rightarrow}\varphi$) if
\[
\displaystyle\lim_{n\rightarrow\infty}\widehat{\gamma}(\varphi_n,\varphi)=0
\]
We can extend $\widehat{\gamma}$ to $\mathcal{H}_c(T^*M)$: if we choose $H$ with flow $\psi^t$, we can define $\widehat{\gamma}(H)=\sup_{t\in [0,1]}\widehat{\gamma}(\psi^t)$. The following inequality (see \cite{CaVi08})
\begin{equation}\label{inequality}
\widehat{\gamma}(\varphi)\leq \|H\|_{C^0}
\end{equation}
holds, where $\varphi$ is the flow at time one of the Hamiltonian $H(t,q,p)$ and 
\begin{equation*}
\|H\|_{C^0}=\sup_{(t,q,p)}H(t,q,p)-\inf_{(t,q,p)}H(t,q,p)
\end{equation*}
If $\varphi_n$ and $\varphi$ are the time one flows of $H_n$ and $H$ respectively and we have that $H_n\rightarrow H$ in the $C^0$ topology, then $\varphi_n\overset{c}{\rightarrow}\varphi$.
\section{$C^0$-commuting Hamiltonians and Eliashberg-Gromov's theorem}

\begin{defin}{\label{C^0DEF}}
Let $H,K$ be two autonomous Hamiltonians. We will say that $H$ and $K$ $C^0$-commutes if there exist two sequences $H_n,K_n$ of $C^1$ Hamiltonians $C^0$-converging to $H$ and $K$ respectively 
such that, in the $C^0$-topology:
\[
\displaystyle\lim_{n\rightarrow\infty}\{H_n,K_n\}=0
\]
\end{defin}
\begin{prop}
Let $H,K$ be two autonomous Hamiltonians of class $C^{1,1}$ and suppose that $\{H,K\}$ is small in the $C^0$ norm. If $\varphi^t,\psi^s$ are the flows of $H$ and $K$ respectively, then the isotopy $t\mapsto \varphi^t\psi^s\varphi^{-t}\psi^{-s}$ is generated by a $C^0$ small Hamiltonian.
\end{prop}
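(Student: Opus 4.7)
The plan is to write down in closed form the time-dependent Hamiltonian $G_t$ generating the isotopy $\chi^t := \varphi^t\psi^s\varphi^{-t}\psi^{-s}$ (for fixed $s$), and then to bound $\|G_t\|_{C^0}$ by $\|\{H,K\}\|_{C^0}$.

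First I would differentiate $\chi^t$ in $t$ by the chain rule. Using that $\varphi^t$ and $\varphi^{-t}$ are the flows of $H$ and $-H$, together with the symplectic push-forward identity $\sigma_\ast X_F = X_{F\circ\sigma^{-1}}$ valid for any symplectomorphism $\sigma$, a direct computation gives
\begin{equation*}
\frac{d}{dt}\chi^t \;=\; \bigl(X_H - X_{H\circ\psi^{-s}\circ\varphi^{-t}}\bigr)\circ\chi^t,
\end{equation*}
so that $\chi^t$ is the flow of
\begin{equation*}
G_t \;=\; H - H\circ\psi^{-s}\circ\varphi^{-t} \;=\; (H - H\circ\psi^{-s})\circ\varphi^{-t},
\end{equation*}
the last rewriting using the conservation of $H$ along its own flow. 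Since $\varphi^{-t}$ is a homeomorphism of $T^*M$, one has $\|G_t\|_{C^0} = \|H - H\circ\psi^{-s}\|_{C^0}$.

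Next, for every fixed $x$ the real function $u\mapsto H(\psi^{-u}(x))$ is absolutely continuous with pointwise derivative $-\{H,K\}(\psi^{-u}(x))$ (up to a sign depending on the bracket convention): $K\in C^{1,1}$ makes the ODE $\dot y=-X_K(y)$ Lipschitz, so $\psi^{-u}(x)$ is $C^1$ in $u$, and $H\in C^1$ justifies the chain rule. The fundamental theorem of calculus then gives
\begin{equation*}
H(x) - H(\psi^{-s}(x)) \;=\; \int_0^s \{H,K\}(\psi^{-u}(x))\,du,
\end{equation*}
whence
\begin{equation*}
\|G_t\|_{C^0} \;\leq\; |s|\,\|\{H,K\}\|_{C^0},
\end{equation*}
which is $C^0$-small as soon as $\{H,K\}$ is.

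The main obstacle lies in the mild regularity hypothesis: with $H,K\in C^{1,1}$ the Hamiltonian vector fields $X_H,X_K$ are only Lipschitz, so the flows $\varphi^t,\psi^s$ are Lipschitz homeomorphisms but not necessarily of class $C^1$ in the spatial variable, and the candidate generator $G_t=(H - H\circ\psi^{-s})\circ\varphi^{-t}$ is itself only Lipschitz. One must therefore interpret symbols such as $X_{H\circ\psi^{-s}}$ as the well-defined Lipschitz vector field $(\psi^s)_\ast X_H$ (existing a.e.\ by Rademacher) and verify the identity for $\frac{d}{dt}\chi^t$ a.e. The cleanest way to sidestep these technicalities is to approximate $H,K$ in $C^{1,1}$ by smooth Hamiltonians $H_n,K_n$, establish the formula for the generator $G_t^{(n)}$ in the smooth setting, and pass to the limit using uniform Lipschitz bounds on the flows and the $C^0$-convergence of the Poisson brackets $\{H_n,K_n\}\to\{H,K\}$.
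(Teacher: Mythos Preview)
Your argument is correct and is in fact the standard one: compute the generator of the commutator isotopy and bound it via the fundamental theorem of calculus by $|s|\,\|\{H,K\}\|_{C^0}$. Note, however, that the paper does not itself prove this proposition; it is stated without proof, the implicit reference being \cite{CaVi08}, where precisely the computation you outline is carried out. So there is nothing to compare against beyond confirming that your derivation of $G_t=(H-H\circ\psi^{-s})\circ\varphi^{-t}$ and the ensuing estimate match the intended argument.

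One small comment on presentation: the regularity discussion in your last paragraph is more cautious than necessary for the \emph{statement} being proved. The claim is only that the generating Hamiltonian $G_t$ is $C^0$-small, not that it is smooth, and your formula for $G_t$ together with the integral bound already delivers this directly for $H,K\in C^{1,1}$: the flows exist and are Lipschitz by Cauchy--Lipschitz, $u\mapsto H(\psi^{-u}(x))$ is $C^1$, and the chain-rule identity $\frac{d}{du}H(\psi^{-u}(x))=\pm\{H,K\}(\psi^{-u}(x))$ holds pointwise since $H$ is $C^1$. The smoothing step is therefore optional here, though it is of course the clean way to justify the push-forward identity $(\varphi^t\psi^s)_\ast X_H=X_{H\circ\psi^{-s}\circ\varphi^{-t}}$ in the first computation.
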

Definition \ref{C^0DEF} is a good extension of the standard Poisson brackets commutation since the  following theorem does hold.
\begin{teo}[Cardin, Viterbo \cite{CaVi08}]\label{teoCaVi08} Let $H$ and $K$ be two Hamiltonians of class $C^{1,1}$. If they $C^0$-commute then $\{H,K\}=0$ in the usual sense.
\end{teo}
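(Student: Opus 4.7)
The plan is to upgrade the $C^0$-vanishing of the Poisson bracket of the approximants to exact commutation of the Hamiltonian flows $\varphi^t$ and $\psi^s$ of $H$ and $K$, and then to deduce $\{H,K\}=0$ by a classical Lie-bracket computation.

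I start from sequences $H_n \to H$ and $K_n \to K$ in $C^0$ with $\{H_n, K_n\} \to 0$ in $C^0$, as provided by Definition~\ref{C^0DEF}. A diagonal mollification $\widetilde H_n = H_n \ast \rho_{\epsilon_n}$, $\widetilde K_n = K_n \ast \rho_{\epsilon_n}$ with $\epsilon_n \downarrow 0$ chosen carefully produces $C^\infty$ (hence $C^{1,1}$) sequences preserving all three convergences. Applying the Proposition just above to each pair $(\widetilde H_n, \widetilde K_n)$, for every fixed $s$ the isotopy $t \mapsto \varphi_n^t \psi_n^s \varphi_n^{-t} \psi_n^{-s}$ is generated by a Hamiltonian whose $C^0$-norm vanishes as $n\to\infty$. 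Inequality (\ref{inequality}) then gives $\widehat\gamma(\varphi_n^t \psi_n^s \varphi_n^{-t} \psi_n^{-s}) \to 0$. On the other hand, $\widetilde H_n\to H$ and $\widetilde K_n\to K$ in $C^0$, together with the time-rescaling that realises each $\varphi^t$ as the time-one flow of $tH$, yield via the closing remark of Section~2 that $\varphi_n^t\overset{c}{\to}\varphi^t$ and $\psi_n^s\overset{c}{\to}\psi^s$. Bi-invariance of $\widehat\gamma$ makes composition and inversion $\widehat\gamma$-continuous by telescoping, so the four-fold commutator $c$-converges as well:
\[
\varphi_n^t \psi_n^s \varphi_n^{-t} \psi_n^{-s} \overset{c}{\longrightarrow} \varphi^t \psi^s \varphi^{-t} \psi^{-s}.
\]
Combining this with the previous limit and the triangle inequality for $\widehat\gamma$ gives $\widehat\gamma(\varphi^t \psi^s \varphi^{-t} \psi^{-s}) = 0$. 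Since $\widehat\gamma$ is a metric, $\varphi^t \psi^s = \psi^s \varphi^t$ for all $s,t$.

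It remains to extract $\{H,K\}=0$ from these commuting flows. Differentiating $\varphi^t\psi^s = \psi^s\varphi^t$ in $s$ at $s=0$ gives $(\varphi^t)_\ast X_K = X_K$, and a further derivative in $t$ at $t=0$ yields $[X_H, X_K]=0$, equivalently $X_{\{H,K\}}=0$ and hence $d\{H,K\}=0$. Under the standing compact-support assumption (or on a closed manifold with $\{H,K\}$ vanishing somewhere), the continuous function $\{H,K\}$ is then forced to vanish identically. The $C^{1,1}$ regularity is enough for the differentiations to make sense pointwise, since $X_H, X_K$ are Lipschitz.

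The principal obstacle is the middle paragraph: converting the $C^0$-smallness of the commutator's generating Hamiltonian into $\widehat\gamma$-triviality of the limiting diffeomorphism $\varphi^t\psi^s\varphi^{-t}\psi^{-s}$. This hinges on both the continuity of composition and inversion for the bi-invariant metric $\widehat\gamma$ and on the fact that $C^0$-convergence of Hamiltonians passes to $c$-convergence of flows --- the two structural features of the $\widehat\gamma$-framework developed in \cite{CaVi08}. Once these are in hand, the rest of the argument is essentially formal.
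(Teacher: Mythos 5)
The paper itself offers no proof of this theorem (it only cites \cite{CaVi08}), so your argument must stand on its own, and it has a genuine gap at exactly the point you single out as ``the principal obstacle'': the passage from $\widehat\gamma(\varphi_n^t\psi_n^s\varphi_n^{-t}\psi_n^{-s})\to 0$ to $\varphi^t\psi^s=\psi^s\varphi^t$. The metric $\widehat\gamma$ is defined, and known to be non-degenerate, only on $\mathcal{HD}_c(T^*M)$, the time-one maps of \emph{smooth} compactly supported Hamiltonians: its definition requires $\varphi(L)$ to be a Lagrangian submanifold admitting a GFQI. For $H,K$ merely $C^{1,1}$ the flows $\varphi^t,\psi^s$ are bi-Lipschitz homeomorphisms that need not be differentiable, $\varphi^t(O_M)$ need not carry a GFQI, and neither $\widehat\gamma(\varphi^t)$ nor the claim $\varphi_n^t\overset{c}{\to}\varphi^t$ is covered by the results you invoke (the closing remark of Section 2 is stated for $H_n,H\in\mathcal{H}_c$). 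Worse, your mollified sequence $\widetilde H_n=H_n\ast\rho_{\epsilon_n}$ converges to $H$ only in $C^0$, so the vector fields $X_{\widetilde H_n}$ need not approach $X_H$ and the maps $\varphi_n^t$ need not converge to $\varphi^t$ in any pointwise sense; you therefore have no convergence, in any sense in which limits are unique, tying the maps $\theta_n=\varphi_n^t\psi_n^s\varphi_n^{-t}\psi_n^{-s}$ to the map $\theta=\varphi^t\psi^s\varphi^{-t}\psi^{-s}$. Writing ``$\widehat\gamma(\theta)=0$ and $\widehat\gamma$ is a metric'' silently extends $\widehat\gamma$ to a completion and asserts non-degeneracy there; that assertion is precisely the hard rigidity content of the theorem and cannot be assumed.

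The repair (and, in substance, the argument of \cite{CaVi08}) needs a second approximation: mollify $H$ and $K$ themselves, $H^{(m)}=H\ast\rho_{1/m}$, so that $DH^{(m)}\to DH$ uniformly (this is where $C^{1,1}$, indeed already $C^1$, is used) and hence the smooth commutators $\theta^{(m)}$ converge to $\theta$ \emph{uniformly as maps}. A telescoping estimate using bi-invariance and inequality (\ref{inequality}), comparing $\theta^{(m)}$ with your $\theta_n$ through the common $C^0$-limit $(H,K)$, gives $\widehat\gamma(\theta^{(m)})\to 0$. If $\theta\neq\mathrm{Id}$ it displaces some small ball $B$; by uniform convergence $\theta^{(m)}$ displaces $B$ for large $m$, and the energy--capacity inequality bounds $\widehat\gamma(\theta^{(m)})$ from below by the positive capacity of $B$, a contradiction. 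This displacement-energy step is the symplectic input your sketch omits. Your closing paragraph is fine once commutation of the flows is secured: $[X_H,X_K]=0$ almost everywhere gives $D\{H,K\}=0$ a.e.\ for the Lipschitz function $\{H,K\}$, which is then constant and vanishes by compact support.
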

The last theorem can be extended also to the affine at infinity case (see \cite{Hu09} Lemma 10). Another important generalization of the previous theorem can be
found in \cite{EnPo10}.
In what follows we will consider only sequences of symplectomorphisms $n\mapsto \Phi^{(n)}$ that are bounded deformations of the identity, more precisely such that $\mbox{supp}(\Phi^{(n)})-\mbox{Id}$ is compact. 
\begin{teo}[Symplectic rigidity, \cite{El81},    \cite{El87},   \cite{EkHo89},    \cite{Gr85}]
The group of compactly supported symplectomorphisms is $C^0$-closed in the group of all diffeomorphisms of $\mathbb{R}^{2d}$.
\end{teo}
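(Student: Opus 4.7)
The plan is to use Theorem \ref{teoCaVi08} to show that $\Phi$ preserves the Poisson commutativity of smooth Hamiltonians, upgrade this to a pointwise conformal condition $\Phi^*\omega = c\,\omega$ by a Lagrangian Grassmannian argument, and finally force $c\equiv 1$ using closedness of $\omega$ and compact support. Since $\Phi^{(n)}-\mbox{Id}$ is assumed to be supported in a fixed compact set $K$, the limit $\Phi-\mbox{Id}$ is also supported in $K$, $\Phi^{(n)}\to\Phi$ uniformly on $\mathbb{R}^{2d}$, and $\Phi$ is a compactly supported $C^\infty$ diffeomorphism. The case $\dim=2$ is classical (orientation-preserving area preservation is $C^0$-closed), so I focus on $\dim\geq 4$.

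\emph{Preservation of commutativity.} Take smooth bounded $f,g:\mathbb{R}^{2d}\to\mathbb{R}$ with $\{f,g\}\equiv 0$. Symplecticity of $\Phi^{(n)}$ gives $\{f\circ\Phi^{(n)},g\circ\Phi^{(n)}\}=\{f,g\}\circ\Phi^{(n)}\equiv 0$, while uniform continuity of $f,g$ together with the uniform convergence $\Phi^{(n)}\to\Phi$ yield $f\circ\Phi^{(n)}\to f\circ\Phi$ and $g\circ\Phi^{(n)}\to g\circ\Phi$ in $C^0$. Thus $f\circ\Phi$ and $g\circ\Phi$ $C^0$-commute in the sense of Definition \ref{C^0DEF}; being $C^\infty\subset C^{1,1}$, Theorem \ref{teoCaVi08} forces $\{f\circ\Phi,g\circ\Phi\}\equiv 0$.

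\emph{Pointwise conformal symplecticity.} Fix $x_0\in\mathbb{R}^{2d}$ and set $y_0=\Phi(x_0)$. Given an arbitrary Lagrangian $L\subset T^*_{y_0}\mathbb{R}^{2d}$, choose global linear Darboux coordinates $(Q,P)$ with $L=\mathrm{span}\{dP_i|_{y_0}\}_{i=1}^d$, together with smooth bounded $\phi_i$ satisfying $\phi_i'(P_i(y_0))\neq 0$. Setting $f_i=\phi_i(P_i)$, each $f_i$ depends on a single canonical momentum, hence $\{f_i,f_j\}\equiv 0$ \emph{on all of} $\mathbb{R}^{2d}$. The previous step now yields $\{f_i\circ\Phi,f_j\circ\Phi\}(x_0)=0$ for every pair $(i,j)$, which says that $(d\Phi_{x_0})^*L$ is an isotropic subspace of $T^*_{x_0}\mathbb{R}^{2d}$ of the maximal dimension $d$, hence Lagrangian. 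Since $L$ is arbitrary, the linear isomorphism $(d\Phi_{x_0})^*$ sends the Lagrangian Grassmannian of $(T^*_{y_0},\omega^{-1}_{y_0})$ into that of $(T^*_{x_0},\omega^{-1}_{x_0})$; a standard linear algebra fact (in dimension $\geq 4$, two non-degenerate antisymmetric $2$-forms sharing the same Lagrangian Grassmannian must be proportional) produces a smooth non-vanishing function $c$ with $\Phi^*\omega=c\,\omega$.

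\emph{Conclusion and main obstacle.} Closedness of $\omega$ and $\Phi^*\omega$ yields $dc\wedge\omega=0$; the Lefschetz injectivity of $\wedge\,\omega$ on $1$-forms (valid for $\dim\geq 4$) forces $dc=0$, so $c$ is constant, and $\Phi=\mbox{Id}$ outside $K$ fixes $c=1$. Hence $\Phi^*\omega=\omega$ and $\Phi$ is a symplectomorphism. The delicate point is passing from the merely integrated $C^0$-commutativity statement to the \emph{pointwise} algebraic condition at $x_0$: one must exhibit a family of smooth Hamiltonians commuting \emph{globally} whose differentials at $y_0$ span a prescribed Lagrangian. The choice $f_i=\phi_i(P_i)$, functions of a single canonical momentum in an arbitrary Darboux frame adapted to $L$, is precisely what makes Theorem \ref{teoCaVi08} applicable as stated, bypassing any local-to-global patching and any compact-support difficulty.
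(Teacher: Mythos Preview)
Your argument is correct in spirit and takes a genuinely different route from the paper. The paper works directly with the component functions $Q_i^{(n)},P_i^{(n)}$ of the sequence: after Theorem~\ref{teoCaVi08} (plus Lemma~10 of \cite{Hu09}) yields the off-diagonal relations $\{Q_i,Q_j\}=\{P_i,P_j\}=\{Q_i,P_j\}=0$ for $i\neq j$, it introduces the auxiliary combinations $\widetilde{Q}_i=Q_i+\frac{1}{\sqrt d}\sum_kP_k$, $\widetilde{P}_i=P_i+\frac{1}{\sqrt d}\sum_kQ_k$, checks $\{\widetilde{Q}_i^{(n)},\widetilde{P}_i^{(n)}\}=0$ by hand, passes to the limit, and solves the resulting linear system to get $\{Q_1,P_1\}=\cdots=\{Q_d,P_d\}=:C$; constancy of $C$ is then extracted from the Jacobi identity and the invertibility of $D\Phi$. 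Your route replaces this ad~hoc algebra by the geometric statement that the pullback $(d\Phi_{x_0})^*$ preserves the Lagrangian Grassmannian, whence $\Phi^*\omega=c\,\omega$ by the linear-algebra fact you quote, and constancy of $c$ follows from $d\omega=0$. Your approach is more conceptual and generalises more readily; the paper's is more elementary and entirely self-contained once Theorem~\ref{teoCaVi08} is granted.

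One technical point deserves tightening. Theorem~\ref{teoCaVi08} as proved in \cite{CaVi08} is for compactly supported Hamiltonians; the paper explicitly invokes Lemma~10 of \cite{Hu09} to cover the affine-at-infinity case needed for the coordinate functions. Your test functions $f_i=\phi_i(P_i)$, with $\phi_i$ smooth bounded, yield $f_i\circ\Phi$ that are neither compactly supported nor affine at infinity (outside $K$ they equal $\phi_i(P_i)$), so the theorem does not apply as stated and your closing claim that the compact-support difficulty has been ``bypassed'' is not quite right. The fix is immediate: drop the $\phi_i$ and take $f_i=P_i$ themselves. Then $\{P_i,P_j\}=0$ globally, $f_i\circ\Phi$ is affine at infinity, and Lemma~10 of \cite{Hu09} applies verbatim---this is exactly the extension the paper uses. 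With that change your proof goes through.
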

\begin{proof}
To fix the notations: $(q,p)=(q_1,\ldots,q_d,p_1,\ldots,p_d)\in\mathbb{R}^{2d}$ and denote with
\[
(Q_1^{(n)}(q,p),\ldots,Q_d^{(n)}(q,p),P_1^{(n)}(q,p),\ldots,P_d^{(n)}(q,p))
\]
a sequence of symplectic transformations $C^0$-converging to $(Q(q,p),P(q,p))$.\\
Note that we have to prove only $\{Q_i,P_i\}=1$. In fact, the other relations $\{ Q_i,Q_j  \}=0=\{  P_i,P_j \}$, and $\{Q_i,P_j\}=0$ for $i\neq j$, are automatically satisfied using Theorem \ref{teoCaVi08} and Lemma 10 in \cite{Hu09}. Now we define a new sequence (using the previous one)
\[
\begin{cases}
\displaystyle\widetilde{Q}_i^{(n)}=Q_i^{(n)}+\frac{1}{\sqrt{d}}\sum_{k=1}^dP_k^{(n)}\\
\displaystyle\widetilde{P}_i^{(n)}=P_i^{(n)}+\frac{1}{\sqrt{d}}\sum_{k=1}^dQ_k^{(n)}
\end{cases}
\]
Clearly $\{\widetilde{Q}_i^{(n)},\widetilde{P}_i^{(n)}\}=0$, in fact
\begin{gather*}
\{\widetilde{Q}_i^{(n)},\widetilde{P}_i^{(n)}\}=\{Q_i^{(n)},P_i^{(n)}\}+\frac{1}{\sqrt{d}}\sum_{k=1}^d(\{P_k^{(n)},P_i^{(n)}\}+\{Q_i^{(n)},Q_k^{(n)}\})\\
+\frac{1}{d}\sum_{k=1}^d\{P_k^{(n)},Q_k^{(n)}\}=1-1=0
\end{gather*}
Using again Theorem \ref{teoCaVi08} and Lemma 10 in \cite{Hu09}, we get $\{\widetilde{Q}_i,\widetilde{P}_i\}=0$: passing to the limit,
\begin{gather*}
\begin{array}{rcl}
\{\widetilde{Q}_i,\widetilde{P}_i\}&=&\{Q_i,P_i\}+\frac{1}{\sqrt{d}}\sum_{k=1}^d(\{P_k,P_i\}+\{Q_i,Q_k\})
+\frac{1}{d}\sum_{k=1}^d\{P_k,Q_k\},  \\
\\
{}&{=}&  \{Q_i,P_i\}+\frac{1}{d}\sum_{k=1}^d\{P_k,Q_k\}=0
\end{array}
\end{gather*}
Define (just to semplify the notations) $C_i(q,p)=\{Q_i,P_i\}$. For every fixed $(q,p)\in\mathbb{R}^{2d}$ the last homogeneous linear system reads
\[
\begin{pmatrix}
d-1 & -1 & \ldots & -1\\
-1 & d-1 & \ldots & -1\\
\vdots & \vdots & \ddots & \vdots\\
-1 & -1 & \ldots  & d-1 
\end{pmatrix}
\begin{pmatrix}
C_1\\
C_2\\
\vdots \\
C_d
\end{pmatrix}=
\begin{pmatrix}
0\\
0\\
\vdots \\
0
\end{pmatrix}
\]
that has $C_1=C_2=\ldots=C_d$ as solution; in fact, the $d\times d$ matrix has determinant equal to zero: if we sum the last $d-1$ rows we get the opposite of the first row; in particular the rank of the matrix is $d-1$, so the subspace of solutions has dimension $1$, spanned by the above equal components vector. 

Recalling the Jacobi identity
\[
\{f,\{g,h\}\}+\{g,\{h,f\}\}+\{h,\{f,g\}\}=0
\]
we obtain, considering terms like $\{Q_i,\{Q_j,P_j\}\}$ for $i\ne j$,
\[
0=\{Q_i,\{Q_j,P_j\}\}+\{Q_j,\{P_j,Q_i\}\}+\{P_j,\{Q_i,Q_j\}\}
\]
and since $\{Q_i,P_j\}=\{Q_i,Q_j\}=0$, we get
\[
\{Q_i,\{Q_j,P_j\}\}=0
\]
Analogously, starting with $\{P_i,\{Q_j,P_j\}\}$ we obtain
\[
\{P_i,\{Q_j,P_j\}\}=0
\]
Once we have posed $C_1(q,p)=C_2(q,p)=\ldots=C_d(q,p)=C(q,p)$, using the previous relations, we have 
\[
\begin{cases}
\{Q_1,C\}=0\\
\{Q_2,C\}=0\\
\mbox{\phantom{PP}}\vdots \\
\{P_{d-1},C\}=0\\
\{P_d,C\}=0
\end{cases}
\]
that is a homogeneous linear system of the type $A\cdot DC=0$
\[
\begin{pmatrix}
-Q_{1,p_1} & \ldots & -Q_{1,p_d} & Q_{1,q_1} & \ldots & Q_{1,q_d}\\
-Q_{2,p_1} & \ldots & -Q_{2,p_d} & Q_{2,q_1} & \ldots & Q_{2,q_d}\\
\vdots & \ddots & \vdots & \vdots & \ddots & \vdots \\
-P_{d,p_1} & \ldots & -P_{d,p_d} & P_{d,q_1} & \ldots & P_{d,q_d} 
\end{pmatrix}
\begin{pmatrix}
C_{,q_1}\\C_{,q_2}\\ \vdots \\ C_{,p_d}
\end{pmatrix}=
\begin{pmatrix}
0\\ 0\\ \vdots \\ 0
\end{pmatrix}
\]
From the fact that $\Phi:(q,p)\mapsto (Q_1,\ldots,Q_d,P_1,\ldots,P_d)$ is a diffeomorphism we have $\det A\neq 0$ (because $A=D\Phi\cdot\mathbb{E}$ where $\mathbb{E}$ is the symplectic matrix) and so $C(q,p)=C$, a constant. It remains to show that $C=1$. This comes from the fact that the limit is a deformation of the identity, i.e. outside a compact set of $\mathbb{R}^{2d}$ we have $(Q_1,\ldots,Q_d,P_1,\ldots,P_d)=(q_1,\ldots,q_d,p_1,\ldots,p_d)$ and so $\{q_i,q_j\}=\{p_i,p_j\}=0$ and $\{q_i,p_j\}=\delta_{ij}$ outside this compact set. From the fact that the Poisson brackets are (at least) continuous then we must have $C=1$.
\end{proof}
\section{Connection with Lie brackets}

It is possible to extend the connection between the Poisson brackets and the Lie brackets. We proceed in this way: we define a ``weak'' Hamiltonian vector field (so that it is defined even if the Hamiltonian is not $C^1$ but only Lipschitz) and then we extend the definition of Lie brackets using this vector field. First of all we recall here the definition of weak Lie brackets by Rampazzo and Sussmann (see \cite{RaSu01} section 5) and then, following that line of thought, we propose a definition of weak Hamiltonian vector field.
\begin{defin}[Rampazzo, Sussmann \cite{RaSu01}]\label{RS}
Let $f,g$ be two locally Lipschitz vector fields on $\mathbb{R}^n$. We define the Lie bracket of $f$ and $g$ at $x$, and we will write $[f,g](x)$, to be the convex hull of the set of all vectors
\[
v=\displaystyle\lim_{j\rightarrow\infty}(Df(x_j)\cdot g(x_j)-Dg(x_j)\cdot f(x_j))
\] 
for all sequences $\{x_j\}_{j\in\mathbb{N}}$ such that
\begin{enumerate}
\item $x_j\in$ Diff$(f)\cap$ Diff$(g)$ for all $j$;
\item $\lim_{j\rightarrow\infty}x_j=x$;
\item the limit $v$ exists. 
\end{enumerate}
\end{defin}
Inspired by Definition \ref{RS}, we introduce
\begin{defin}
Let $H$ be a Lipschitz continuous compactly supported Hamiltonian on $\mathbb{R}^{2n}$. We define the weak Hamiltonian vector field $\mathbb{X}_H$ as the following set valued vector field:
\[
\mathbb{X}_H:=
\begin{cases}
\mathbb{E}D H(q,p), \mbox{ if } (q,p)\in\mbox{Diff}(H)\\
\displaystyle\mbox{\textbf{ch}}\{v: v=\lim\mathbb{E}D H(q_j,p_j), \forall (q_j,p_j)\rightarrow(q,p)\}, \mbox{ otherwise }
\end{cases}
\]
where $\mathbb{E}$ is the standard symplectic matrix, \textbf{ch} is the convex hull and the sequences $(q_j, p_j)$ belong to $\mbox{Diff}(H)$.
\end{defin}
The weak Hamiltonian vector field has two properties:
\begin{itemize}
\item[(i)]$\mathbb{X}_H(q,p)$ is a non empty, closed and convex set of $\mathbb{R}^{2n}$;
\item[(ii)]if $H\in C^1$ then the weak vector field coincides with the usual Hamiltonian vector field.
\end{itemize}
It is well known that if $H,K$ are two $C^2$ Hamiltonians then the following equality holds:
\[
[X_H,X_K]=X_{\{H,K\}}
\]
We can extend this relation to the case when $H$ and $K$ are $C^{1,1}$.
\begin{defin}
Let $H,K\in C^{1,1}$. We define the weak Lie brackets as
\[
\llbracket X_H,X_K\rrbracket :=\mathbb{X}_{\{H,K\}}
\] 
\end{defin}
Note that $X_H$ and $X_K$ are well defined, but their Lie brackets are not (because $X_H$ and $X_K$ are only Lipschitz vector fields). Because of the definition of $\mathbb{X}_H$ the following proposition holds:
\begin{prop}
The weak Lie brackets have two properties:
\begin{itemize}
\item[(i)] the definition coincides with the one given in \cite{RaSu01} (i.e. these weak Lie brackets are compatible with Rampazzo-Sussmann's ones); 
\item[(ii)] if $H,K\in C^2$ then $\llbracket X_H,X_K\rrbracket = [X_H,X_K]$.
\end{itemize}
\end{prop}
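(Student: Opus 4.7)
The two parts of the proposition have very different character: (ii) is an immediate consequence of the definitions together with the classical Lie bracket identity, while (i) is the substantive content and reduces to a measure-theoretic comparison of two sets of cluster directions.

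For part (ii), assume $H,K\in C^{2}$. Then $\{H,K\}$ is of class $C^{1}$, so property (ii) of the weak Hamiltonian vector field recalled above yields $\mathbb{X}_{\{H,K\}}=X_{\{H,K\}}$. Combining this with the classical identity $[X_H,X_K]=X_{\{H,K\}}$ (valid for $C^{2}$ Hamiltonians) gives
\[
\llbracket X_H,X_K\rrbracket=\mathbb{X}_{\{H,K\}}=X_{\{H,K\}}=[X_H,X_K].
\]

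For part (i), assume $H,K\in C^{1,1}$. By Rademacher's theorem applied to the Lipschitz maps $DH$ and $DK$, the set
\[
\Omega\;:=\;\mathrm{Diff}(X_H)\cap\mathrm{Diff}(X_K)
\]
has full Lebesgue measure in $\mathbb{R}^{2n}$. At each $x_j\in\Omega$ the first partials of $H$ and $K$ are differentiable, hence so is the algebraic combination $\{H,K\}$, and the standard pointwise identity
\[
DX_H(x_j)\,X_K(x_j)-DX_K(x_j)\,X_H(x_j)=X_{\{H,K\}}(x_j)=\mathbb{E}\,D\{H,K\}(x_j)
\]
holds (up to a global sign depending only on conventions). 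Consequently, along any sequence $x_j\to x$ with $x_j\in\Omega$, the vector whose limit defines the Rampazzo--Sussmann bracket of $X_H$ and $X_K$ at $x$ is exactly the vector $\mathbb{E}\,D\{H,K\}(x_j)$ whose limit defines $\mathbb{X}_{\{H,K\}}(x)$.

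The main---and essentially only---obstacle is to check that the two convex hulls agree, not merely that one is contained in the other. The definition of $\mathbb{X}_{\{H,K\}}$ allows sequences in the possibly larger full-measure set $\mathrm{Diff}(\{H,K\})$, while Definition \ref{RS} only permits sequences in $\Omega$. The tool I would invoke is the well-known fact from Clarke's generalized-gradient calculus that for a Lipschitz function $\phi$ the set of accumulation vectors $\lim D\phi(x_j)$ as $x_j\to x$ is unaffected if one removes any Lebesgue-null set from $\mathrm{Diff}(\phi)$. Applied to $\phi=\{H,K\}$ together with the null set $\mathrm{Diff}(\{H,K\})\setminus\Omega$, this equality of cluster sets upgrades to the desired equality of convex hulls, so that $\llbracket X_H,X_K\rrbracket(x)$ coincides with the Rampazzo--Sussmann bracket of $X_H$ and $X_K$ at $x$, completing (i).
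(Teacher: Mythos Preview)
Your argument is correct and considerably more detailed than what the paper offers: the paper gives no proof at all, prefacing the proposition only with the phrase ``Because of the definition of $\mathbb{X}_H$ the following proposition holds.'' Your treatment of (ii) is the intended one-line reduction, and for (i) your identification of the pointwise identity on the full-measure set $\Omega=\mathrm{Diff}(X_H)\cap\mathrm{Diff}(X_K)\subset\mathrm{Diff}(\{H,K\})$ together with the null-set invariance of the Clarke generalized gradient makes explicit the verification the authors leave entirely to the reader.
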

In this constructed framework we are ready to state the following result involving the $C^0$-commutativity:
\begin{prop}
If $H,K$ are $C^0$-commuting $C^{1,1}$ Hamiltonians then weak commutativity of vector fields holds: $\llbracket X_H,X_K\rrbracket =0$. Conversely, if we have $\llbracket X_H,X_K\rrbracket =0$ then $\{H,K\}=0$. In particular $H,K$  $C^0$-commute.  
\end{prop}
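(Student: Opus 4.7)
The plan is to treat the two implications separately, each exploiting the defining identity $\llbracket X_H,X_K\rrbracket = \mathbb{X}_{\{H,K\}}$. Because $H,K\in C^{1,1}$, the Poisson bracket $\{H,K\}$ is locally Lipschitz and compactly supported, so the right-hand side is a well-defined set-valued field in the sense of the preceding definition.

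The forward implication is almost a restatement of Theorem \ref{teoCaVi08}. Assuming $H,K$ are $C^0$-commuting $C^{1,1}$ Hamiltonians, Cardin-Viterbo gives $\{H,K\}\equiv 0$ pointwise. Then $\mathbb{X}_{\{H,K\}}$ is the weak Hamiltonian vector field associated with the zero function, which is identically $\{0\}$ (the zero function is everywhere differentiable, so only the first branch of the definition of $\mathbb{X}_{\cdot}$ is used). Hence $\llbracket X_H,X_K\rrbracket =0$.

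For the converse, suppose $\mathbb{X}_{\{H,K\}}\equiv\{0\}$. The key step is to recover the pointwise vanishing of the scalar function $\{H,K\}$ from the vanishing of its set-valued symplectic gradient. At any point $(q,p)\in\mathrm{Diff}(\{H,K\})$ the first branch of the definition gives $\mathbb{X}_{\{H,K\}}(q,p)=\{\mathbb{E}\, D\{H,K\}(q,p)\}$; the hypothesis and invertibility of $\mathbb{E}$ then force $D\{H,K\}(q,p)=0$. Since $\{H,K\}$ is Lipschitz on $\mathbb{R}^{2n}$, Rademacher's theorem says $\mathrm{Diff}(\{H,K\})$ has full measure, so $D\{H,K\}=0$ almost everywhere. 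Lipschitz functions are absolutely continuous on lines, so integration along segments yields that $\{H,K\}$ is locally constant, and compact support of $H,K$ forces this constant to be $0$; therefore $\{H,K\}\equiv 0$. The $C^0$-commutativity assertion then follows at once by taking the constant approximating sequences $H_n\equiv H$, $K_n\equiv K$, legitimate because $H,K\in C^{1,1}\subset C^1$.

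There is no deep analytical obstacle; the only nontrivial point is the converse bookkeeping, namely checking that the set-valued hypothesis $\mathbb{X}_{\{H,K\}}=\{0\}$ reduces, through Rademacher plus compact support, to the classical $\{H,K\}=0$. Everything else is a direct invocation of Theorem \ref{teoCaVi08} and the definitions just introduced.
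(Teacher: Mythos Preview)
Your forward implication matches the paper's exactly: both invoke Theorem~\ref{teoCaVi08} to get $\{H,K\}=0$ and then read off $\mathbb{X}_{\{H,K\}}=0$ from the definition.

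For the converse you take a different, and in fact more self-contained, route. The paper argues indirectly: it invokes the Rampazzo--Sussmann result that vanishing of the weak Lie bracket implies commutation of the flows, and from flow-commutation concludes $\{H,K\}=0$. You instead unwind the definition of $\mathbb{X}_{\{H,K\}}$ directly: on the full-measure set $\mathrm{Diff}(\{H,K\})$ the hypothesis gives $\mathbb{E}\,D\{H,K\}=0$, hence $D\{H,K\}=0$ a.e.\ by Rademacher, and then Lipschitz regularity plus compact support force $\{H,K\}\equiv 0$. This is a valid and arguably cleaner argument, since it stays entirely within elementary real analysis and does not require the flow-commutation theorem from \cite{RaSu01}. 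The paper's route, on the other hand, makes explicit the dynamical content (commuting flows) that underlies the equivalence, which ties back to the motivation of the section.
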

\begin{proof}
If $H,K$ $C^0$-commute then we have $\{H,K\}=0$ and so we get easily from the definition $\llbracket X_H,X_K\rrbracket=0$. Conversely if $\llbracket X_H,X_K\rrbracket=0$ then their flows commute (see \cite{RaSu01}) and so $H,K$ commute and in particular $C^0$-commute.  
\end{proof}

\bibliography{bibliography}{}

\begin{thebibliography}{10}

\bibitem{CaVi08}
F.~Cardin and C.~Viterbo.
\newblock Commuting {H}amiltonians and {H}amilton-{J}acobi multi-time
  equations.
\newblock {\em Duke Math. J.}, 144(2):235--284, 2008.

\bibitem{EkHo89}
I.~Ekeland and H.~Hofer.
\newblock Symplectic topology and {H}amiltonian dynamics.
\newblock {\em Mathematische Zeitschrift}, 200:355--378, 1989.
\newblock 10.1007/BF01215653.

\bibitem{El81}
Y.~M. Eliashberg.
\newblock Rigidity of symplectic and contact structures.
\newblock {\em Preprint}, 1981.

\bibitem{El87}
Y.~M. Eliashberg.
\newblock A theorem on the structure of wave fronts and its application in
  symplectic topology.
\newblock {\em Funktsional. Anal. i Prilozhen.}, 21(3):65--72, 1987.

\bibitem{EnPo10}
M.~Entov and L.~Polterovich.
\newblock {$C^0$}-rigidity of {P}oisson brackets.
\newblock In {\em Symplectic topology and measure preserving dynamical
  systems}, volume 512 of {\em Contemp. Math.}, pages 25--32. Amer. Math. Soc.,
  Providence, RI, 2010.

\bibitem{Gr85}
M.~Gromov.
\newblock Pseudoholomorphic curves in symplectic manifolds.
\newblock {\em Invent. Math.}, 82(2):307--347, 1985.

\bibitem{Hu09}
V.~Humili\`ere.
\newblock Hamiltonian pseudo-representations.
\newblock {\em Comment. Math. Helv.}, 84(3):571--585, 2009.

\bibitem{LiMa87}
P.~Libermann and C.-M. Marle.
\newblock {\em Symplectic geometry and analytical mechanics}, volume~35 of {\em
  Mathematics and its Applications}.
\newblock D. Reidel Publishing Co., Dordrecht, 1987.
\newblock Translated from the French by Bertram Eugene Schwarzbach.

\bibitem{RaSu01}
F.~Rampazzo and H.~J. Sussmann.
\newblock Set-valued differentials and a nonsmooth version of chow's theorem.
\newblock In {\em In Proc. of 40th Conf. Decision Control}, pages 2613--2618.
  IEEE publications, 2001.

\bibitem{Si86}
J.-C. Sikorav.
\newblock Sur les immersions lagrangiennes dans un fibr\'e cotangent admettant
  une phase g\'en\'eratrice globale.
\newblock {\em C. R. Acad. Sci. Paris S\'er. I Math.}, 302(3):119--122, 1986.

\bibitem{Vi92}
C.~Viterbo.
\newblock Symplectic topology as the geometry of generating functions.
\newblock {\em Mathematische Annalen}, 292(1):685--710, 1992.
\newblock 10.1007/BF01444643.

\bibitem{Vi06}
C.~Viterbo.
\newblock Symplectic topology and {H}amilton-{J}acobi equations.
\newblock In P.~Biran, O.~Cornea, and F.~Lalonde, editors, {\em Morse Theoretic
  Methods in Nonlinear Analysis and in Symplectic Topology}, volume 217, pages
  439--459. Springer Netherlands, 2006.

\bibitem{We79}
A.~Weinstein.
\newblock {\em Lectures on symplectic manifolds}, volume~29 of {\em CBMS
  Regional Conference Series in Mathematics}.
\newblock American Mathematical Society, Providence, R.I., 1979.
\newblock Corrected reprint.

\end{thebibliography}
\bibliographystyle{abbrv}

\end{document}